\newtheorem{theorem}{Theorem}[section]
\newtheorem{corollary}[theorem]{Corollary}
\newtheorem{proposition}[theorem]{Proposition}
\theoremstyle{remark}
\newtheorem{definition}[theorem]{Definition}
\newtheorem{example}{Example}
\newtheorem*{theo}{Theorem}
\newtheorem*{question}{Question}
\newtheorem{remark}[theorem]{Remark}
\newtheorem*{examples}{Examples}
\newcommand{\spans}{\operatorname{span}}
\newcommand{\HH}{\mathcal{H}}
\newcommand{\BH}{\mathcal{B}(\mathcal{H})}
\newcommand{\KH}{\mathcal{K}(\mathcal{H})}
\newcommand{\FH}{\mathcal{F}(\mathcal{H})}
\newcommand{\I}{\mathcal{I}}
\newcommand{\LL}{\mathcal{L}}
\DeclareMathOperator{\diag}{diag}
\begin{document}
\title[On Simplicity of Lie algebras]{On Simplicity of Lie algebras of Compact operators: \\A Direct Approach}

\author{Sasmita Patnaik}
\address{S.~Patnaik, Department of Mathematics and Statistics, Indian Institute of Technology, Kanpur, Uttar Pradesh 208016 (INDIA)}
\email{sasmita@iitk.ac.in}

\subjclass[2020]{Primary: 47B02, 47B07, 47L20, 17B65;
Secondary: 47B10, 47B06} 
\keywords{Lie algebras, Lie ideals, compact operators, operator ideals}

\maketitle
\begin{abstract}
We investigate an algebraic variant of the Wojty$\acute{\text{n}}$ski problem on the simplicity of Lie algebras of compact operators on a separable infinite-dimensional complex Hilbert space. We prove the non-simplicity of Lie algebras of compact operators under a mild softness condition using the notion of soft-edged operator ideals introduced by Kaftal and Weiss. We believe our study offers a new perspective on the investigation of the simplicity of Lie algebras by relating it to the study of operator ideals. 
\end{abstract}

\section{Introduction}
A long-standing open question that has called the attention of many mathematicians is the 
Wojty$\acute{\text{n}}$ski problem \cite[Question 3]{W76}: Does every closed Lie algebra of compact quasinilpotent operators on a Banach space contain a nontrivial closed Lie ideal? Several partial results centered around this question have been obtained by Bre$\check{\text{s}}$ar, Shulman, and Turovskii \cite{BST10}. Historically, to the best of our knowledge, consideration of the adjoint representation of elements of Lie algebras in the study of the simplicity of Lie algebras seemed inevitable. The purpose of this paper is to offer a new (and more direct) approach that bypasses the use of adjoint representation in the study of the simplicity of Lie algebras of operators that act on a Hilbert space. We achieve this by implementing advanced operator ideal techniques and establish a bridge connecting a purely algebraic problem with an analytic behavior of certain operator ideals. We begin our study by focussing on the algebraic simplicity of Lie algebras. (That is, simplicity of Lie algebras without any topology.)

The theme of this paper is to address the question: \textit{Which infinite-dimensional Lie algebras of compact operators on a Hilbert space are (algebraically) simple?}
We then obtain a partial answer to the following natural algebraic analog of the above Wojty$\acute{\text{n}}$ski problem in the Hilbert space setting: Does every Lie algebra of compact quasinilpotent operators on a Hilbert space contain a nontrivial Lie ideal?  
In this paper, we investigate questions on the existence of nontrivial Lie ideals (not necessarily closed in any topology) of Lie algebras of compact operators via the notion of soft-edged ideals in the theory of operator ideals. Soft-edged ideals (soft ideals, in short) were first introduced by Kaftal and Weiss \cite[Definition 6]{KW02}. Soft ideals have played a significant role in various aspects of operator ideal theory, see \cite{D66}, \cite{KW02}, \cite{KW07}, \cite{M64}, \cite{P80}-\cite{PW13} for instance. Our main result, Theorem \ref{theorem} provides a criterion involving soft ideal condition that guarantees the (algebraic) non-simplicity of Lie algebras of compact operators. In the end, we discuss the simplicity of Lie algebras of finite rank operators in the Hilbert space framework.

\section{Preliminaries}
Let $\HH$ denote a separable infinite-dimensional complex Hilbert space and $\BH$ the algebra of bounded operators acting on $\HH$.
By an operator ideal, we mean a two-sided ideal $\I$ of $\BH$ (henceforth called $\BH$-ideal). We say that a $\BH$-ideal $\I$ is nontrivial, if $\{0\} \neq \I \neq \BH$. We first provide the necessary background required to understand the advanced operator ideal techniques that we employ in addressing the aforementioned questions on the simplicity of Lie algebras.

\subsection*{Background} \label{background}
Calkin established the inclusions $\FH \subset \I \subset \KH$ for a nontrivial $\BH$-ideal $\I$  \cite[Theorem 1.7]{C41}, where $\FH$ is the $\BH$-ideal of finite rank operators and $\KH$ is the $\BH$-ideal of compact operators. For a detailed background on $\BH$-ideals, one can refer to the classic book by Gohberg and Krein \cite{GK1969}. 

The $\BH$-ideals were completely characterized by Calkin \cite{C41} who showed that there is a one-to-one correspondence between the proper $\BH$-ideals and the so-called ideal sets (or characteristic sets), denoted by $\Sigma$, of nonnegative numbers decreasing to zero, i.e., $\Sigma \subset c^*_0$. These ideal sets are hereditary (i.e., solid under positive order) cones that are invariant under ampliations. For each $m \in \mathbb N$, the $m$-fold ampliation $D_m$ is defined by:
$$c^*_0 \ni \xi \rightarrow D_m\xi := \left<\xi_1, \cdots, \xi_1, \xi_2, \cdots, \xi_2, \cdots\right>$$
with each entry $\xi_i$ of $\xi = \left<\xi_n\right>$ repeated $m$ times. For a given ideal $\I$, the characteristic set associated with $\I$ is given by $\Sigma(\I) := \{\eta \in c^*_0 \mid \diag \eta \in \I\}$. 
The Calkin correspondence $\I \rightarrow \Sigma(\I)$ induced by $$\I \ni T \rightarrow s(T) \in \Sigma(\I),$$ where $s(T)$ denotes the sequence of singular numbers of $T$, is a lattice isomorphism between $\BH$-ideals and characteristic sets and its inverse is the map from a characteristic set $\Sigma$ to the ideal generated by the collection of the diagonal operators $\{\diag \xi \mid \xi \in \Sigma\}$.
Calkin's characterization of $\BH$-ideals essentially reduces the entire theory of (non-commutative) $\BH$-ideals to the much simpler theory of (commutative) sequence spaces. The insight into problems concerning $\BH$-ideals gained by this approach often enables one to solve problems related to $\BH$-ideals.

We next give a brief description of certain properties of $\BH$-ideals via the Calkin's characterization that are relevant for this paper.  

\begin{remark}\label{facts}{Some standard facts on $\BH$-ideals:}
\begin{enumerate}[label=(\roman*)]
\item \cite[Section 4]{KW07} If $\I$ and $\mathcal J$ are $\BH$-ideals, then the product $\I\mathcal J$ is again a $\BH$-ideal and its characteristic set is given by $$\Sigma(\I\mathcal J) = \{\xi \in c^*_0 \mid \xi \leq \eta\rho \text{ for some } \eta \in \Sigma(\I) \text{ and } \rho \in \Sigma(\mathcal J)\}.$$

\item \cite[Section 1]{KW07} For $T \in \KH$, let $(T)$ denote the principal ideal of $\BH$ generated by $T$. Then, $$A \in (T) \text{ if and only if } s(A) = O(D_m(s(T))) \text{ for some } m \geq 1.$$  

\noindent (For $\xi, \eta \in c^*_0$, $\xi = O(\eta)$ means that $\xi_n/\eta_n \leq M$ for some constant $M>0$ and for all $n\geq 1$.) \\

\item \cite[Section 2, first paragraph]{KW07} For a $\BH$-ideal $\I$, $A \in \I$ if and only if $A^* \in \I$ if and only if $|A| \in \I$ (via the polar decomposition $A = V|A|$ and $V^*A = |A| := (A^*A)^{1/2}$) if and only if $\diag s(A) \in \I$. Consequently, $(A) = (|A|) = (\diag s(A))$.

\item When $T = \displaystyle{\sum^{n}_{i=1}}A_iTB_i$ with each $A_i$ or $B_i$ in $\KH$, $s(T) = o(D_m(s(T)))$ for some $m >1$. Indeed, $T \in (T)\KH$ and so using the above two facts (i)-(ii), one obtains $s(T) = O(D_m(s(T))s(C))$ for some $m>1$ and $C \in \KH$. And then it follows from $\Sigma(\KH) = c^*_0$ that $s(T) = o(D_m(s(T)))$.  
(For $\xi, \eta \in c^*_0$, $\xi = o(\eta)$ means that $\xi_n/\eta_n \rightarrow 0$ as $n \rightarrow \infty$.)
\end{enumerate}
\end{remark}

\subsection*{Soft ideals in $\BH$}\label{soft}
\begin{definition}\label{soft}\cite[Definition 6]{KW02} An ideal $\I$ of $\BH$ is called a soft-edged ideal (i.e., soft ideal, in short) if $$\I = \I \KH.$$

\noindent In other words, $\I$ is soft-edged if and only if for each $\xi \in \Sigma(\I)$, one has $\xi = o(\eta)$ for some $\eta \in \Sigma(\I)$, see \cite[Section 4, first paragraph following Definition 4.1]{KW07}. 

\end{definition}

\begin{examples}\label{E1} Some examples of soft ideals and non-soft ideals of $\BH$ are listed below.
\begin{enumerate}[label=(\roman*)]
\item The Gohberg and Krein ideals $\mathfrak{S}_{\phi}$ generated by symmetric norming functions $\phi$ are soft ideals if and only if $\phi$ is mononormalising \cite[Chapter III, Sections 6-7]{GK1969}. In particular, all Schatten $p$-class ideals ($p\geq1$) are soft ideals [ibid, Section 7].

\item An idempotent $\BH$-ideal $\I$ is a soft ideal. Indeed, $$\I = \I^2 \subset \I\KH \subset \I.$$
Hence, one obtains equality, i.e., $\I = \I\KH$. For instance, $\FH$ and $\KH$ are idempotent ideals and hence soft ideals.

\item The principal $\BH$-ideal generated by the positive diagonal operator $\diag\left<1/n\right>$ is not a soft ideal. If it were, then by Definition \ref{soft}, $(\diag\left<1/n\right>) = (\diag\left<1/n\right>)\KH$. This implies that $\left<1/n\right> \in \Sigma((\diag\left<1/n\right>)\KH)$. By Remark \ref{facts}(i)-(ii) and that $\Sigma(\KH) = c_0^*$, one obtains $\left<1/n\right> = o (D_m\left<1/n\right>)$ for some $m > 1$. The $k^{th}$ entry of the sequence $\left<\frac{1/n}{D_m\left<1/n\right>}\right>$ where $k = mj+r$ given by $\frac{1/(mj+r)}{1/(j+1)} \rightarrow 1/m$ as $k \rightarrow \infty$, contradicting $\left<1/n\right> = o (D_m\left<1/n\right>)$.
\end{enumerate}
\end{examples}

The following theorem of Fong and Radjavi, \cite[Theorem 1:(v)-(vi)]{FR83}, ties together an algebraic condition with an analytic condition related to softness of ideals. We mention it below for completeness.
We point out that in the case of principal $\BH$-ideals, one derives an efficient criterion to test the softness of such ideals, see Corollary \ref{C1} below. 

\begin{theo}\cite[Theorem 1]{FR83} Let $P$ be a positive compact operator of infinite rank. The following are equivalent.
\begin{enumerate}[label=(\roman*)]
\item $P = A_1PB_1 + \cdots + A_kPB_k$ for some $k >1$ where $A_i, B_i \in \BH$ and either the $A_i$ or the $B_i$ are compact operators.  
\item For some integer $k>1$, $s_{nk}(P) = o(s_n(P))$ as $n \rightarrow \infty$.
\end{enumerate}
(The $k$ in (i) and (ii) need not be the same, see \cite[Remarks (4)-(5)]{FR83} for the constraints on $k$.)
\end{theo}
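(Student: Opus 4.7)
The plan is to establish both implications via the Calkin correspondence and the algebra--sequence space dictionary summarized in Remark \ref{facts}. Throughout, one exploits that $(P)\KH$ has characteristic set described as in Remark \ref{facts}(i)--(ii), so that the abstract statement ``$P$ is expressible as a finite sum $\sum A_i P B_i$ with compact factors on one side'' is equivalent to the membership $P \in (P)\KH$, and hence to a pointwise bound $s(P) \leq \eta \rho$ with $\eta \in \Sigma((P))$ and $\rho \in c_0^*$.

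For (i)$\Rightarrow$(ii), I would simply invoke Remark \ref{facts}(iv): condition (i) is precisely its hypothesis, so it yields $s(P) = o(D_m s(P))$ for some $m>1$. Now, from the definition of the $m$-fold ampliation, the entry of $D_m s(P)$ in position $nm$ equals $s_n(P)$. Evaluating $s_j(P)/(D_m s(P))_j \to 0$ at the subsequence $j = nm$ yields $s_{nm}(P)/s_n(P) \to 0$, which is exactly (ii) with $k = m$.

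For (ii)$\Rightarrow$(i), I plan to reverse the above: assume $s_{nk}(P)/s_n(P)\to 0$ and upgrade this to the pointwise statement $s(P) = o(D_k s(P))$ by writing an arbitrary index $j$ as $nk+r$ with $0 \leq r < k$, so $\lceil j/k\rceil \in \{n, n+1\}$ and $s_j(P) \leq s_{nk}(P)$, whence $s_j(P)/(D_k s(P))_j$ is controlled by $\max\{s_{nk}(P)/s_n(P), s_{nk}(P)/s_{n+1}(P)\}\to 0$. Setting $\eta := D_k s(P) \in \Sigma((P))$ and $\rho_j := s_j(P)/\eta_j$, I would then pass to the decreasing majorant $\tilde{\rho}_j := \sup_{i\geq j}\rho_i$, which still lies in $c_0^*$ and still dominates $\rho$ pointwise, so $s(P) \leq \eta\tilde{\rho}$ with $\eta \in \Sigma((P))$ and $\tilde{\rho} \in \Sigma(\KH)$. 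By Remark \ref{facts}(i)--(iii) this places $P \in (P)\KH$, and unpacking the definitions of $(P)$ and of the product ideal produces a finite expansion $P = \sum_m A_m P B_m'$ with each $B_m'$ compact, which is (i).

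The only mildly delicate step is the decreasing-rearrangement trick that turns the pointwise ratio $s_j(P)/\eta_j$ into a genuine element of $c_0^*$ while preserving the domination $s(P) \leq \eta \tilde{\rho}$; once this is in place, the rest is dictionary translation. The freedom to pass from a single product $XY \in (P)\KH$ to the finite sum $\sum A_m P B_m'$ demanded by (i) is what allows the $k$ in (i) and the $k$ in (ii) to differ, matching the remark of Fong and Radjavi cited at the end of the statement.
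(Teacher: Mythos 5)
First, a point of comparison: the paper does not prove this statement at all --- it is quoted verbatim from Fong--Radjavi \cite[Theorem 1]{FR83} ``for completeness'' --- so there is no in-paper argument to measure yours against, and I assess your proposal on its own terms. Your direction (i)$\Rightarrow$(ii) is fine: it is exactly Remark \ref{facts}(iv) followed by reading off the $nm$-th entry of $D_m s(P)$, and the translation of (i) into $P \in (P)\KH$ is legitimate. The final dictionary translation in (ii)$\Rightarrow$(i) (decreasing majorant $\tilde\rho$, Remark \ref{facts}(i)--(iii), unpacking $(P)\KH$ into a finite sum with compact right factors) is also sound.

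The genuine gap is the ``upgrade'' step in (ii)$\Rightarrow$(i): the claim that $s_{nk}(P)=o(s_n(P))$ implies $s(P)=o(D_k s(P))$ \emph{with the same} $k$ is false, and your justification fails precisely where you assert that $s_{nk}(P)/s_{n+1}(P)\to 0$; monotonicity gives $s_{nk}/s_{n+1}\ge s_{nk}/s_n$, so this does not follow from the hypothesis. Concretely, take $k=2$ and let $s_j(P)=c_i$ for $2^i\le j<2^{i+1}$ with $c_{i+1}/c_i\to 0$ (e.g.\ $c_i=2^{-i^2}$). Then for $2^i\le n<2^{i+1}$ one has $s_{2n}(P)=c_{i+1}$ and $s_n(P)=c_i$, so $s_{2n}(P)=o(s_n(P))$; but at $n=2^i-1$ one gets $s_{2n}(P)=s_{2^{i+1}-2}(P)=c_i=s_{2^i}(P)=s_{n+1}(P)$, so $s_{2n}(P)/s_{n+1}(P)=1$, and likewise at $j=2^{i+1}-1$ one has $s_j(P)=c_i=(D_2 s(P))_j$, so $s(P)\neq o(D_2 s(P))$. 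The repair is cheap and is exactly why the parenthetical remark about the two $k$'s being different exists: pass to a \emph{larger} ampliation, say $m=2k$. Then for $n=\lceil j/m\rceil\ge 2$ one has $j>(n-1)\cdot 2k\ge nk$, hence $s_j(P)\le s_{nk}(P)$ and $s_j(P)/(D_m s(P))_j\le s_{nk}(P)/s_n(P)\to 0$, giving $s(P)=o(D_{2k}s(P))$; the remainder of your argument then runs unchanged with $\eta=D_{2k}s(P)$. Note also that your closing sentence attributes the mismatch of the $k$'s solely to the expansion of the product ideal into a finite sum, whereas it already arises at this ampliation step --- the very step where your draft insists on keeping $k$ fixed.
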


As a consequence, one immediately obtains

\begin{corollary}\label{C1}
Let $P$ be a positive compact operator of infinite rank. The following are equivalent.
\begin{enumerate}[label=(\roman*)]
\item The principal ideal $(P)$ is a soft ideal.
\item For some integer $k>1$, $s_{nk}(P) = o(s_n(P))$ as $n \rightarrow \infty$. 
\end{enumerate}
\end{corollary}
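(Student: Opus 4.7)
The strategy is to translate softness of the principal ideal $(P)$ into the algebraic representation appearing in Fong--Radjavi (i), and then invoke their theorem to pass to condition (ii). Softness means $(P) = (P)\KH$; the inclusion $(P)\KH \subseteq (P)$ is automatic for any ideal, so the task reduces to showing $(P) \subseteq (P)\KH$, and I would first reduce this further to the single membership $P \in (P)\KH$.

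For that reduction, suppose $P \in (P)\KH$, say $P = \sum_i U_i V_i$ with $U_i \in (P)$ and $V_i \in \KH$. A typical $T \in (P)$ has the form $T = \sum_j X_j P Y_j$ with $X_j, Y_j \in \BH$. Substituting the expression for $P$ and expanding each $U_i$ as $\sum_\ell C_{i\ell} P D_{i\ell}$ produces a finite sum of terms $(X_j C_{i\ell})\,P\,(D_{i\ell} V_i Y_j)$, whose right-hand factors are compact because $V_i$ is. Hence $T \in (P)\KH$, so $(P)$ is soft precisely when $P \in (P)\KH$.

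It remains to match $P \in (P)\KH$ with Fong--Radjavi~(i). Unfolding the product exactly as above, $P \in (P)\KH$ is equivalent to a representation $P = \sum_{i=1}^k A_i P B_i$ with each $B_i$ compact, which is Fong--Radjavi~(i). The technical requirement $k > 1$ is met by splitting any single term $APB$ into two equal halves, and the asymmetry between ``all $A_i$ compact'' and ``all $B_i$ compact'' in their hypothesis is harmless since the ideal product is commutative, i.e., $(P)\KH = \KH(P)$ (their characteristic sets coincide by Remark~\ref{facts}(i)). Applying the equivalence (i)$\,\Leftrightarrow\,$(ii) of the Fong--Radjavi theorem then yields the corollary. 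The whole argument is essentially bookkeeping around the definition of the product ideal; the only step requiring care is keeping track of which factor remains compact when expanding elements of $(P)\KH$.
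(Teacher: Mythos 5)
Your proposal is correct and follows exactly the route the paper intends: the paper presents Corollary \ref{C1} as an immediate consequence of the Fong--Radjavi theorem, and the bookkeeping you supply (softness of $(P)$ reduces to the single membership $P \in (P)\KH$, which unfolds to the representation in Fong--Radjavi (i), with $(P)\KH = \KH(P)$ handling the two compactness cases) is precisely the implicit argument, which the paper itself also uses in Remark \ref{facts}(iv) and in the proof of Theorem \ref{theorem}.
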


\begin{remark}\label{s-number}
(i) If the singular number sequence of the operator $P$ is given by $s(P) = \left<1/2^n\right>$, then $(P)$ is a soft ideal; but if $s(P) = \left<1/n\right>$, then $(P)$ is not a soft ideal. Indeed, $s_{nk}(P)/s_n(P)=2^n/2^{nk} = 1/2^{n(k-1)} \rightarrow 0$, but $s_{nk}(P)/s_n(P) = n/nk = 1/k \not \rightarrow 0$ as $n \rightarrow \infty$.

(ii) For $(T)$ a principal ideal, if $(T)$ is a soft ideal, then by Remark \ref{facts}(iv), a necessary condition is that $s(T) = o(D_m(s(T)))$ for some $m >1$.

(iii) A sequence $\xi := \left<\xi_n\right> \in c^*_0$ satisfies the $\Delta_2$-condition if $\sup \{\xi_n/\xi_{2n}\} < \infty$ \cite[second paragraph following Definition 6]{KW02}. A sufficient condition that guarantees the non-softness of a principal ideal $(T)$ is that if $s(T)$ satisfies the $\Delta_2$-condition, then $(T)$ is not a soft ideal. Indeed, if $\sup \{s_n(T)/s_{2n}(T)\} < \infty$, then it follows that $D_2(s(T)) \leq Cs(T)$ for some constant $C>0$ which further implies that $D_k(s(T)) \leq Cs(T)$ for each $k>2$. That is, for each $k\geq 2$, the sequence $s(T)/D_k(s(T)) \geq 1/C \not \rightarrow 0$ contradicting the above necessary condition (ii) for the softness of $(T)$.  
\end{remark}

\subsection*{Lie algebras of compact operators}\label{Lie}
\begin{definition}\label{Lie algebra}
A Lie algebra of operators $\LL$ is a subspace of $\BH$ which is closed under the commutator product $[A, B] = AB - BA$ for $A, B \in \LL$.

A Lie ideal $\mathcal{J}$ of $\LL$ is a Lie subalgebra of $\LL$ such that $[A, B] \in \mathcal J$ whenever $A \in \LL$ and $B \in \mathcal J$.
\end{definition}

A Lie algebra $\LL$ is called a \textit{simple} Lie algebra (or algebraically simple Lie algebra) if there is no nontrivial Lie ideal of $\LL$. (The trivial Lie ideals are $\{0\}$ and $\LL$.)
In this paper, we consider (algebraic) Lie ideals, i.e., Lie ideals without any topology. 
 
For a Lie algebra $\LL$, let $[\LL, \LL]$ denote the (linear) span of the set $\{[A, B] \mid A, B \in \LL\}$. It is straightforward to see that $[\LL, \LL]$ is a Lie ideal of $\LL$ which is often called the derived algebra of $\LL$.

\section{Simplicity of Lie algebras of compact operators: a soft criteria}
Throughout this section, $\LL$ is an infinite-dimensional Lie algebra of compact operators acting on a separable infinite-dimensional complex Hilbert space $\HH$ and nonabelian (i.e, $[\LL, \LL] \neq \{0\}$).

A natural class of infinite-dimensional Lie algebras of compact operators that are nonabelian is the nonzero $\BH$-ideals which inherit naturally the Lie algebra structure via the commutator operation. We begin with the following proposition for this class of Lie algebras. 

\begin{proposition}\label{ideal}
Let $\LL$ be a Lie algebra of compact operators. If $\LL$ is a nonzero $\BH$-ideal, then it is automatically not simple.
\end{proposition}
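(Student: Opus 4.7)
Since $\LL$ is assumed to be a nonzero $\BH$-ideal and every nonzero $\BH$-ideal contains $\FH$ (by the Calkin inclusions $\FH \subset \I \subset \KH$ recalled in the Background), we automatically have $\FH \subset \LL \subset \KH$. The plan is to exhibit a nontrivial Lie ideal of $\LL$ by splitting into two cases depending on whether $\LL$ strictly contains $\FH$ or equals $\FH$.

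In the first case, when $\FH \subsetneq \LL$, I claim that $\FH$ itself is a nontrivial Lie ideal of $\LL$. It is nonzero and a proper subset of $\LL$ by hypothesis, so nontriviality is immediate. That it is a Lie ideal follows because $\FH$ is an associative two-sided ideal of $\BH$: for any $A \in \LL \subset \BH$ and $B \in \FH$, both $AB$ and $BA$ lie in $\FH$, so $[A, B] = AB - BA \in \FH$.

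In the second case, when $\LL = \FH$, I would take the derived algebra $\mathcal{J} := [\FH, \FH]$, which is a Lie ideal of $\FH$ (as already observed at the end of Section~2). The algebra $\mathcal{J}$ is nonzero because the standard matrix units $e_{ij} \in \FH$ satisfy, for instance, $[e_{11}, e_{12}] = e_{12} \neq 0$. It is a proper subspace of $\FH$ because $\operatorname{tr}([A, B]) = 0$ for all $A, B \in \FH$ (the trace of a commutator of finite rank operators vanishes), so $\mathcal{J}$ lies in the kernel of the trace functional restricted to $\FH$, whereas any rank-one projection in $\FH$ has trace $1$ and hence does not belong to $\mathcal{J}$.

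I do not anticipate a serious obstacle: the key input is Calkin's structure theorem, which hands us $\FH$ as a canonical proper sub-$\BH$-ideal for free whenever $\LL \neq \FH$, and the trace obstruction on commutators of finite rank operators takes care of the remaining edge case $\LL = \FH$. The mild subtlety worth flagging is only the need to handle $\LL = \FH$ separately, since there one cannot invoke a strictly smaller nonzero $\BH$-ideal.
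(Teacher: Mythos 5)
Your proposal is correct and follows essentially the same route as the paper: it invokes the Calkin inclusion $\FH \subset \LL$ to produce $\FH$ as a nontrivial Lie ideal when $\FH \subsetneq \LL$, and handles the remaining case $\LL = \FH$ via the derived algebra $[\FH, \FH]$, which is nonzero and proper because commutators of finite rank operators have trace zero. The only difference is cosmetic: the paper phrases the first case as ``$\LL$ contains an infinite rank operator,'' which is equivalent to your condition $\FH \subsetneq \LL$.
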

\begin{proof}
If $\LL$ is a nonzero $\BH$-ideal that contains an infinite rank operator, then $\LL$ is not simple because $\FH  \subsetneq \LL$ is a nontrivial Lie ideal of $\LL$ (\cite[Chapter III, Section 1, Theorem 1.1]{GK1969}). If $\LL = \FH$, then the derived algebra $[\FH, \FH]$ is a nontrivial Lie ideal of $\LL$ because $[\FH, \FH] \neq 0$ and $[\FH, \FH]$ consists of trace-zero finite rank operators which is clearly a proper subset of $\FH$. 
\end{proof}

We next consider Lie algebras that are not $\BH$-ideals and investigate its simplicity. 
We denote by $(\LL)$ the $\BH$-ideal generated by a Lie algebra $\LL$.
We begin with a qualitative analysis of the derived ideal $[\LL, \LL]$ of $\LL$.
\begin{proposition}\label{derived-ideal}
For $\LL$ a Lie algebra of compact operators, if $[\LL, \LL] = \LL$, then $(\LL)$ is idempotent.
\end{proposition}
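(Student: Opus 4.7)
The aim is to show the two-sided ideal $(\LL)$ of $\BH$ generated by $\LL$ coincides with its square $(\LL)^2$. One inclusion $(\LL)^2 \subseteq (\LL)$ is automatic since $(\LL)$ is an ideal of $\BH$, so the whole content is the reverse inclusion $(\LL) \subseteq (\LL)^2$.

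The plan is to exploit the hypothesis $[\LL,\LL] = \LL$ to realize every element of $\LL$ as a sum of products of two elements of $(\LL)$. More precisely, I would take an arbitrary $T \in \LL$; since $\LL = [\LL,\LL]$ and the latter is by definition the linear span of commutators of elements of $\LL$, I can write
\[
T = \sum_{i=1}^{n} c_i\, [A_i, B_i] = \sum_{i=1}^{n} \bigl( (c_i A_i) B_i - B_i (c_i A_i) \bigr)
\]
with each $A_i, B_i \in \LL$ and scalars $c_i$. Each factor lies in $\LL \subseteq (\LL)$, so each summand is a product of two elements of $(\LL)$ and therefore belongs to the ideal product $(\LL)(\LL) = (\LL)^2$ (which is a $\BH$-ideal by Remark~\ref{facts}(i)). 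Hence $T \in (\LL)^2$, giving $\LL \subseteq (\LL)^2$.

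To finish, I would observe that $(\LL)^2$ is itself a two-sided ideal of $\BH$ that contains $\LL$. Since $(\LL)$ is by definition the \emph{smallest} such ideal, it follows that $(\LL) \subseteq (\LL)^2$, and combined with the trivial reverse inclusion we get $(\LL) = (\LL)^2$, i.e.\ $(\LL)$ is idempotent.

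I do not foresee any real obstacle here: the argument is purely formal, depending only on the definition of $[\LL,\LL]$ as the linear span of commutators, the fact that $\LL \subseteq (\LL)$, and the standard fact that the product of two $\BH$-ideals is again a $\BH$-ideal. The only point requiring a line of care is writing commutators $[A_i,B_i]$ as sums of products so as to land in $(\LL)^2$ rather than merely in $(\LL)$.
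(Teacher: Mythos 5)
Your argument is correct and follows essentially the same route as the paper's own proof: write each element of $\LL=[\LL,\LL]$ as a linear combination of commutators, note each product $A_iB_i$, $B_iA_i$ lies in $(\LL)(\LL)=(\LL)^2$, and then use the fact that $(\LL)^2$ is a $\BH$-ideal containing $\LL$ together with minimality of $(\LL)$ to conclude $(\LL)\subseteq(\LL)^2\subseteq(\LL)$. No substantive differences.
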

\begin{proof}
Recall that $[\LL, \LL] = \spans\{[A, B] \mid A, B \in \LL\}$. Since $A, B \in \LL$, the products $AB, BA \in (\LL)(\LL)$. Hence $[\LL, \LL] \subset (\LL)^2$. By hypothesis, $\LL = [\LL, \LL]$. Combining these observations, one has
$$\LL = [\LL, \LL] \subset (\LL)^2.$$
By Remark \ref{facts}(i), $(\LL)^2$ is a $\BH$-ideal, so $(\LL) \subset (\LL)^2$. As $(\LL)^2 \subset (\LL)$, one obtains $(\LL)^2 = (\LL)$, i.e., $(\LL)$ is idempotent.
\end{proof}
An immediate consequence is
\begin{corollary}\label{idempotent}
If $(\LL)$ is not a soft ideal, then $\LL$ is not simple.
\end{corollary}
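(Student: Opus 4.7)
The plan is to argue by contrapositive: assume $\LL$ is simple and deduce that $(\LL)$ must be a soft ideal. The proof is essentially a three-line concatenation of facts already established in the excerpt, so I expect no real obstacles — the main content is recognizing the right chain of implications.

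First, I would invoke simplicity on the derived algebra. Recall from the discussion following Definition \ref{Lie algebra} that $[\LL, \LL]$ is always a Lie ideal of $\LL$, and by the standing assumption of this section $\LL$ is nonabelian, so $[\LL, \LL] \neq \{0\}$. Simplicity of $\LL$ then forces $[\LL, \LL] = \LL$, since $\{0\}$ has been excluded.

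Second, I would apply Proposition \ref{derived-ideal} to the equality $[\LL, \LL] = \LL$ to conclude that the $\BH$-ideal $(\LL)$ generated by $\LL$ is idempotent, i.e., $(\LL)^2 = (\LL)$.

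Third, I would quote Example \ref{E1}(ii), which says that every idempotent $\BH$-ideal is automatically soft (because $(\LL) = (\LL)^2 \subset (\LL)\KH \subset (\LL)$ forces equality). Hence $(\LL)$ is a soft ideal, completing the contrapositive. There is one small sanity check to note in passing: the hypothesis of Proposition \ref{derived-ideal} implicitly requires $\LL$ to be a Lie algebra of compact operators, which is exactly the standing assumption of this section, so nothing extra needs to be verified. Altogether, the corollary is immediate from the two preceding propositions, and the role of softness enters only via the elementary observation about idempotent $\BH$-ideals.
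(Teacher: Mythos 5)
Your proposal is correct and is essentially the paper's own argument run in the contrapositive direction: the paper deduces from non-softness that $(\LL)$ is not idempotent (via Example (ii)), hence $[\LL,\LL]\neq\LL$ by Proposition \ref{derived-ideal}, and then uses nonabelianness to exhibit $[\LL,\LL]$ as a nontrivial Lie ideal. The same two ingredients appear in the same roles, so there is no substantive difference.
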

\begin{proof}
Since $(\LL)$ is not a soft ideal, by Examples (ii) mentioned above, $(\LL)$ is not idempotent. Therefore, by Proposition \ref{derived-ideal}, $[\LL, \LL] \neq \LL$. Moreover, we have assumed that $\LL$ is nonabelian, so $[\LL, \LL] \neq \{0\}$. So $[\LL, \LL]$ is a nontrivial Lie ideal of $\LL$, thereby implying $\LL$ is not simple.
\end{proof}


\begin{remark}\label{class}
Combining Proposition \ref{derived-ideal} and Corollary \ref{idempotent}, one obtains $$[\LL, \LL] = \LL \, \Rightarrow (\LL) = (\LL)^2 \Rightarrow (\LL) \text{ is a soft ideal}.$$
Each of these implications is proper. Indeed, for the first implication, consider $\LL \subset \FH$ such that there is $F \in \LL$ with nonzero trace. Since the $\BH$-ideal generated by any subset of finite rank operators is $\FH$, so $(\LL) = \FH$. Hence, $(\LL)$ is idempotent but $\LL \neq [\LL, \LL]$ because $[\LL, \LL]\subset [\FH, \FH]$. For the second implication, consider $\LL = \mathfrak S^0_1$, the Lie algebra of all trace-zero trace class operators. The $\BH$-ideal generated by $\LL$ is the set of all trace class operators, denoted by $\mathfrak{S}_1$ (notation followed from \cite[Chapter III, Section 7]{GK1969}) which is a soft ideal by Examples (i), but $\mathfrak{S}_{1}$ is not idempotent as $\mathfrak S^2_1 = \mathfrak S_{1/2}$.
\end{remark}

We next prove the main result of this paper that provides an easier criterion to verify the non-simplicity of Lie algebras as compared to the non-softness of $(\LL)$. 

\begin{theorem}\label{theorem}
Let $\LL$ be a Lie algebra of compact operators. If there exists $T \in \LL$ such that the principal $\BH$-ideal generated by $T$, i.e., $(T)$ is not a soft ideal, then $\LL$ is not simple.
\end{theorem}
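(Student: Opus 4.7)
The plan is to construct a nontrivial Lie ideal of $\LL$ directly from $T$, rather than try to upgrade the hypothesis to ``$(\LL)$ is not soft'' and invoke Corollary \ref{idempotent} (such an implication is far from obvious, since softness does not pass to superideals). The natural candidate is the intersection of $\LL$ with the ``soft interior'' of $(T)$,
$$\mathcal J \;:=\; \LL \cap (T)\KH.$$

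The first step is to verify that $\mathcal J$ is a Lie ideal of $\LL$. For $X \in \LL$ and $B \in \mathcal J$, write $B = \sum_i C_i D_i$ as a finite sum with $C_i \in (T)$ and $D_i \in \KH$. Since $(T)$ and $\KH$ are two-sided $\BH$-ideals, $XC_i \in (T)$ and $D_i X \in \KH$, so both $XB = \sum (XC_i)D_i$ and $BX = \sum C_i(D_i X)$ lie in $(T)\KH$. Hence $[X,B] \in (T)\KH$, and since $[X,B] \in \LL$ automatically, $[X,B] \in \mathcal J$.

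The second step is to show $\mathcal J \subsetneq \LL$ via the softness-detection equivalence
$$(T) \text{ is soft} \;\iff\; T \in (T)\KH.$$
The forward direction is immediate, since $(T) = (T)\KH \ni T$. For the reverse, given $T = \sum_i C_i D_i$ with $C_i \in (T)$ and $D_i \in \KH$, an arbitrary $A = \sum_j X_j T Y_j \in (T)$ can be rewritten as
$$A \;=\; \sum_{i,j} (X_j C_i)(D_i Y_j) \;\in\; (T)\KH,$$
using that $X_j C_i \in (T)$ and $D_i Y_j \in \KH$; thus $(T) \subset (T)\KH$ and $(T) = (T)\KH$. (Alternatively one can read this equivalence off from Remark \ref{facts}(iv) together with Corollary \ref{C1}.) Since $(T)$ is assumed not soft, $T \notin (T)\KH$, so $T \in \LL \setminus \mathcal J$ and $\mathcal J$ is proper.

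The last step is a case split on whether $\mathcal J$ is zero. If $\mathcal J \neq \{0\}$, then $\mathcal J$ is the desired nontrivial proper Lie ideal of $\LL$. If instead $\mathcal J = \{0\}$, fix $X \in \LL$: then $XT \in \KH(T) = (T)\KH$ and $TX \in (T)\KH$ (the two products coincide by Remark \ref{facts}(i)), so $[X,T] \in \LL \cap (T)\KH = \{0\}$. Thus $T$ is central in $\LL$, and $\mathbb C T$ is a $1$-dimensional Lie ideal --- nonzero because $(T)$ not soft forces $T \neq 0$, and proper because $\LL$ is infinite-dimensional. Either way, $\LL$ is not simple. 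The main obstacle is the softness-detection equivalence in the middle step; once it is in hand, the Lie-ideal verification is purely formal and the case split handles the degenerate possibility that $\mathcal J$ itself is trivial.
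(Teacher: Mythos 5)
Your proof is correct, and it runs on exactly the same engine as the paper's: the equivalence ``$(T)$ is soft $\iff T\in(T)\KH$'' together with the observation that anything manufactured from commutators with elements of $\LL\subset\KH$ lands in $(T)\KH$, while $T$ itself does not. The packaging differs in a way worth noting. The paper fixes $S\in\LL$ with $A:=[T,S]\neq 0$ and works with the principal Lie ideal $\left<A\right>$, which forces it to describe the elements of $\left<A\right>$ as finite sums $\sum A_iTB_i$ with a compact factor on one side --- a step that implicitly requires unwinding iterated commutators. Your candidate $\mathcal J=\LL\cap(T)\KH$ is manifestly a Lie ideal of $\LL$ (the intersection of $\LL$ with a two-sided $\BH$-ideal), so that verification becomes purely formal; you also spell out both directions of the softness-detection equivalence, whereas the paper only needs and only states the direction $T\in(T)\KH\Rightarrow(T)=(T)\KH$. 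The case splits are parallel: the paper separates out the case where $T$ is central in $\LL$, and your degenerate case $\mathcal J=\{0\}$ reduces to showing $T$ is central, after which both arguments use the standing assumption that $\LL$ is infinite-dimensional to conclude $\mathbb C T\subsetneq\LL$. Net effect: same idea, with your choice of Lie ideal giving a slightly cleaner and more self-contained execution.
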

\begin{proof}
Since $(T)$ is not a soft ideal, $T \not \in \FH$. Otherwise, $(T) = \FH$ which is idempotent and hence a soft ideal. 

We prove the non-simplicity of $\LL$ by explicitly constructing a nontrivial Lie ideal of $\LL$.
If the commutator $[T, S] := TS-ST = 0$ for all $S \in \LL$, then the principal Lie ideal generated by $T$ in $\LL$, denoted by $\left<T\right>$, consists of only scalar multiples of $T$. Since $\LL$ is an infinite-dimensional Lie algebra, $\{0\} \neq \left<T\right> \subsetneq \LL$. So, in this case $\LL$ is not simple.
Suppose there exists $S \in \LL$ such that $[T, S] \neq 0$. Let $A := [T,S]$. Consider the principal Lie ideal $\left<A\right>$ in $\LL$. Since $A \neq 0$, $\left<A\right> \neq 0$. We next prove that $T \not \in \left<A\right>$. Suppose $T \in \left<A\right>$.
Note that every element of $\left<A\right>$ is of the form $\displaystyle{\sum^{n}_{i=1}}A_iTB_i$ for some $n > 1$ where each $A_i$ or $B_i$ is a compact operator and each of these finite sums of operators is in $(T)\KH$. Since $T \in \left<A\right>$, this implies that $T \in (T)\KH$ and hence $(T) \subset (T)\KH$. As $(T)\KH \subset (T)$, one obtains $(T) = (T)\KH$, i.e., $(T)$ is a soft ideal, contradicting the hypothesis that $(T)$ is not soft. Therefore, $T \not \in \left<A\right>$. So the principal Lie ideal $\left<A\right> \neq \LL$, and hence $\LL$ is not simple. 
\end{proof}

A few remarks on Theorem \ref{theorem}:
\begin{remark}\label{soft-thm}
(i) Corollary \ref{C1} provides an efficient way to check the non-softness of a principal $\BH$-ideal. 

(ii) There are infinite-dimensional nonabelian Lie algebras that do not satisfy the hypothesis of Theorem \ref{theorem}. For instance, take $\LL$ to be the symplectic Lie algebra of finite rank operators acting on an infinite-dimensional Hilbert space (see Example \ref{Symplectic} below for the definition). The principal $\BH$-ideal generated by each finite rank operator in $\LL$ is $\FH$ which is a soft ideal. We will prove in Example \ref{Symplectic} that this symplectic Lie algebra is simple. 

(iii) The converse of Theorem \ref{theorem} is not true. For example, consider the Lie algebra of all finite rank operators each of whose matrix representation with respect to a fixed orthonormal basis is upper triangular. Then the set of all nilpotent finite rank operators that are in this Lie algebra is a nontrivial Lie ideal. But the principal $\BH$-ideal generated by every element of this Lie algebra is $\FH$ which is soft.
\end{remark}

In view of Remark \ref{s-number}(iii) which provides an analytic condition to check the non-softness of a principal $\BH$-ideal, one obtains 
\begin{theorem}\label{T2}
Let $\LL$ be a Lie algebra of compact operators such that there exists $T \in \LL$ whose singular number sequence satisfies the $\Delta_2$-condition. Then $\LL$ is not simple.
\end{theorem}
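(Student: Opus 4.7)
The plan is to deduce Theorem \ref{T2} as an immediate consequence of Theorem \ref{theorem}: the $\Delta_2$-condition on $s(T)$ is designed precisely so that Remark \ref{s-number}(iii) applies, yielding non-softness of the principal $\BH$-ideal $(T)$, which in turn forces $\LL$ to be non-simple by Theorem \ref{theorem}. So I would present the proof as a two-step chain: $\Delta_2$ on $s(T)$ $\Rightarrow$ $(T)$ not soft $\Rightarrow$ $\LL$ not simple.

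For the first step, given $T \in \LL$ with $M := \sup_n s_n(T)/s_{2n}(T) < \infty$, I would show that $D_k(s(T)) \leq C_k \cdot s(T)$ entrywise for every $k \geq 2$. The base case $k=2$ follows directly from the $\Delta_2$-condition by comparing entries: $(D_2 s(T))_{2n} = s_n(T) \leq M\, s_{2n}(T)$, and $(D_2 s(T))_{2n-1} = s_n(T) \leq M\, s_{2n}(T) \leq M\, s_{2n-1}(T)$ by monotonicity of $s(T)$. Iterating $\lceil\log_2 k\rceil$ times gives the analogous bound with some constant $C_k > 0$ for arbitrary $k$. Consequently $s(T)/D_k(s(T)) \geq 1/C_k$ pointwise for every $k \geq 2$, so $s(T) \neq o(D_m(s(T)))$ for any $m > 1$, violating the necessary condition for softness recorded in Remark \ref{s-number}(ii) (which was itself derived from Remark \ref{facts}(iv)). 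Hence $(T)$ is not a soft ideal.

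For the second step, Theorem \ref{theorem} applied to this $T$ immediately produces a nontrivial Lie ideal of $\LL$ (either the principal Lie ideal $\langle T\rangle$ itself, or $\langle [T,S]\rangle$ for some $S \in \LL$ with $[T,S] \neq 0$, depending on whether $T$ is central in $\LL$), showing that $\LL$ is not simple.

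The main, indeed only, subtlety lies in the first step, namely lifting the $\Delta_2$-control at pairs $(n,2n)$ to a uniform bound $D_k(s(T)) \leq C_k\cdot s(T)$ for every $k$. This is a short monotonicity argument already sketched in Remark \ref{s-number}(iii), so there is no serious obstacle: Theorem \ref{T2} is best viewed as a user-friendly repackaging of Theorem \ref{theorem} with a directly verifiable analytic hypothesis on the singular number sequence.
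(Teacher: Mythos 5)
Your proposal is correct and follows exactly the route the paper intends: Remark \ref{s-number}(iii) converts the $\Delta_2$-condition on $s(T)$ into non-softness of $(T)$ (via the necessary condition of Remark \ref{s-number}(ii)), and Theorem \ref{theorem} then yields non-simplicity of $\LL$. Your filling in of the monotonicity step $D_k(s(T)) \leq C_k\, s(T)$ for all $k \geq 2$ is a correct elaboration of the argument already sketched in that remark.
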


\begin{corollary}\label{KQ}
If $\LL$ is a Lie algebra of compact quasinilpotent operators such that there exists $T \in \LL$ whose singular number sequence satisfies the $\Delta_2$-condition, then $\LL$ is not simple.

In particular, if $\LL$ consists of all compact weighted shift operators, then $\LL$ is not simple.
\end{corollary}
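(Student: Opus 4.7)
The plan is to reduce both statements to Theorem \ref{T2}. The first assertion requires essentially no additional work: the hypotheses of the corollary are strictly stronger than those of Theorem \ref{T2}, since neither compactness nor quasinilpotency plays any role in the $\Delta_2$-conclusion. The qualifier ``quasinilpotent'' is retained only because this is the setting of the algebraic variant of the Wojty\'nski problem discussed in the introduction, so the interest is in highlighting that the $\Delta_2$-criterion already rules out simplicity in that restricted class.

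For the ``in particular'' statement, the strategy is to exhibit a single compact weighted shift $T\in\LL$ whose singular number sequence satisfies the $\Delta_2$-condition; Theorem \ref{T2} then finishes the argument. Fixing an orthonormal basis $\{e_n\}_{n\geq 1}$ of $\HH$, I would take $T$ to be the unilateral weighted shift defined by $Te_n=(1/n)e_{n+1}$. Since the weights tend to zero, $T$ is compact. A direct calculation gives $T^ke_n=\frac{(n-1)!}{(n+k-1)!}\,e_{n+k}$, so $\|T^k\|=1/k!$ and $\|T^k\|^{1/k}\to 0$ by Stirling, which shows $T$ is quasinilpotent and hence $T\in\LL$. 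The identity $T^*Te_n=n^{-2}e_n$ then reads off $s_n(T)=1/n$ (already decreasing), so $s_n(T)/s_{2n}(T)=2$ is bounded and the $\Delta_2$-condition holds. Applying Theorem \ref{T2} to this $T$ completes the proof.

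The only delicate point is interpreting ``all compact weighted shift operators'' as a Lie algebra at all, since the commutator of two unilateral weighted shifts is a $2$-step weighted shift rather than a weighted shift. This is purely a definitional matter: once $\LL$ is read as the Lie subalgebra of $\BH$ generated by the compact weighted shifts (still consisting of compact operators, and quasinilpotent under the mild check that the multi-step weighted shift structure inherited from shifts with weights tending to zero remains quasinilpotent), the witness $T$ constructed above lies in $\LL$ and the Theorem \ref{T2} argument applies verbatim. I do not anticipate any genuine mathematical obstacle; the content is entirely in exhibiting the right witness, and the $\Delta_2$-verification via $s_n(T)/s_{2n}(T)=2$ is the essential mechanism.
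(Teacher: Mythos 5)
Your proposal is correct and follows essentially the same route as the paper: the first assertion is read off from Theorem \ref{T2}, and the second is settled by the same witness, the weighted shift with weight sequence $\left<1/n\right>$, whose singular numbers $s_n(T)=1/n$ make $(T)=(\diag\left<1/n\right>)$ non-soft (equivalently, satisfy the $\Delta_2$-condition). The only difference is cosmetic --- the paper cites Theorem \ref{theorem} directly via non-softness while you route through Theorem \ref{T2}, and your remark about closing the set of weighted shifts under commutators is a reasonable clarification the paper leaves implicit.
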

\begin{proof}
The first part of the corollary follows immediately from Theorem \ref{T2}. For the second part, consider the weighted shift operator $T \in\LL$ with weight sequence $\left<1/n\right>$. Then the principal $\BH$-ideal, $(T) = (\diag \left<1/n\right>)$ is not a soft ideal, so by Theorem \ref{theorem}, $\LL$ is not simple. 
\end{proof}

\noindent \textit{On simplicity of Lie algebras of finite rank operators:} If a Lie algebra consists of finite rank operators, then the principal $\BH$-ideal generated by every element of the Lie algebra is a soft ideal, so we cannot invoke Theorem \ref{theorem} to conclude anything on the non-simplicity of the Lie algebra. Nevertheless, an easy case for non-simplicity of such Lie algebras is

\begin{proposition}
Let $\LL$ be a Lie algebra of finite rank operators acting on $\HH$. If there exists $F \in \LL$ such that the trace of $F$ is not zero, then $\LL$ is not simple.
\end{proposition}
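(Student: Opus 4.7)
The plan is to produce an explicit nontrivial Lie ideal: the trace-zero subspace of $\LL$. Define
\[
\LL_0 := \{A \in \LL : \operatorname{tr}(A) = 0\}.
\]
This is the natural analog of the construction already used inside the proof of Proposition \ref{ideal} for the case $\LL=\FH$, where $[\FH,\FH]$ was identified as the trace-zero finite rank operators.

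First I would check that $\LL_0$ is a Lie ideal of $\LL$. It is a linear subspace because $\operatorname{tr}$ is linear. To see it is a Lie ideal, take $A\in\LL$ and $B\in\LL_0$. Since $A$ and $B$ are finite rank, so are $AB$ and $BA$, and the cyclic property of the trace on finite rank operators gives
\[
\operatorname{tr}([A,B]) = \operatorname{tr}(AB) - \operatorname{tr}(BA) = 0,
\]
so $[A,B] \in \LL \cap \ker(\operatorname{tr}) = \LL_0$. Equivalently, $\operatorname{tr}|_\LL : \LL \to \mathbb{C}$ is a homomorphism onto the abelian Lie algebra $\mathbb{C}$ and $\LL_0$ is its kernel.

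Next I would verify that $\LL_0$ is nontrivial. Properness, $\LL_0 \subsetneq \LL$, is immediate from the hypothesis: $F \in \LL$ satisfies $\operatorname{tr}(F) \neq 0$, so $F \notin \LL_0$. For nonzeroness, recall that $\LL$ is assumed throughout this section to be infinite-dimensional. Since $\LL_0$ is the kernel of a single linear functional on $\LL$, it has codimension at most one, hence is itself infinite-dimensional and in particular $\LL_0 \neq \{0\}$. Combining these facts, $\LL_0$ is a nontrivial Lie ideal of $\LL$, which gives the non-simplicity of $\LL$.

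There is no real obstacle here; the only subtlety worth flagging is that one must use that $\LL$ is infinite-dimensional (a standing assumption of this section) to guarantee $\LL_0 \neq \{0\}$, since in dimension one the construction would collapse. Everything else is a direct application of the linearity of the trace and the vanishing of the trace on commutators of finite rank operators.
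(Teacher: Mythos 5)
Your proof is correct, and it takes a slightly different route from the paper's. The paper's proof is a one-liner: it takes the derived algebra $[\LL,\LL]$ as the nontrivial Lie ideal, which is proper because commutators of finite rank operators have trace zero (so $F \notin [\LL,\LL]$) and nonzero because of the section's standing assumption that $\LL$ is nonabelian. You instead take the larger ideal $\LL_0 = \LL \cap \ker(\operatorname{tr})$, which contains $[\LL,\LL]$; properness is witnessed by the same $F$, but nonzeroness comes from a codimension-one argument using the standing assumption that $\LL$ is infinite-dimensional, rather than from nonabelianness. Both arguments rest on the same underlying fact --- the trace is a Lie algebra homomorphism onto the abelian algebra $\mathbb{C}$ that does not vanish on $\LL$ --- and each leans on a different one of the section's two standing hypotheses to rule out the trivial ideal $\{0\}$; your version has the minor advantage of working even for abelian $\LL$ of dimension at least two, while the paper's works even in finite dimensions provided $\LL$ is nonabelian.
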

\begin{proof}
The derived ideal $[\LL, \LL]$ is the nontrivial Lie ideal in $\LL$.
\end{proof}

We conclude by making a note that the situation seems unpredictable when a Lie algebra consists of only trace-zero finite rank operators as shown in the following examples.

\begin{example}\label{Symplectic}(Symplectic Lie algebra)
For $\HH$ a separable infinite-dimensional complex Hilbert space, consider the symplectic Lie algebra $\LL$ of trace-zero finite rank operators on $\HH \oplus \HH$ as follows. Let $\{e_n\}^{\infty}_{n=1}$ be a fixed orthonormal basis of $\HH$. For $A, B, C, D \in \BH$ such that each of these operators has only finitely many nonzero entries in its matrix representation with respect to the basis $\{e_n\}^{\infty}_{n=1}$, we define 
\begin{equation*}\LL = \left\{\begin{pmatrix}
A & B\\
C & D
\end{pmatrix} \in \mathcal B(\HH \oplus \HH) \mid B^T = B, C^T = -C  \text{ and }  D = -A^T\right\}. \end{equation*}

Observe that $\LL$ is an infinite-dimensional nonabelian Lie algebra that consists of trace-zero finite rank operators. We next prove that $\LL$ is simple. Let $\mathcal J$ be an arbitrary nonzero Lie ideal of $\LL$ and let $T \in \LL$. To show that $\LL$ is simple, we need to prove that $T \in \mathcal J$. Since $T \in \LL$,  
\begin{equation*}T = \begin{pmatrix}
A & B\\
C& -A^T
\end{pmatrix}
\end{equation*} where each block operator has only finitely many nonzero entries in its matrix representation.
For each $N \in \mathbb N$, let 
$\LL_{N}$ be the set of all operators in $\LL$ such that the nonzero entries of the matrix representations of each block operator $A, B ,C$ and $D$ is supported in $N \times N$ square matrix. Choose $N$ large enough such that $T \in \LL_{N}$ and that the Lie ideal $\mathcal J_N := \mathcal J \cap \LL_N \neq \{0\}$ in $\LL_N$. Since $\LL_N$ is a finite-dimensional symplectic Lie algebra, by \cite[Subsection 8.8.3 and Section 8.9]{H03}, $\LL_N$ is simple. Hence $\mathcal J_N = \mathcal L_N$. Moreover, as $T \in \LL_N$ so $T \in \mathcal J_N$ (as $\mathcal J_N = \mathcal L_N$).  And $\mathcal J_N \subset \mathcal J$ implies that $T \in \mathcal J$. Therefore, $\LL$ is simple.
\end{example}

\begin{example}\label{nonsimple}
Consider the Lie algebra $\LL$ that consists of trace-zero finite rank operators that have upper triangular matrix representations with respect to a fixed orthonormal basis of $\HH$. Then $\LL$ is not simple because the Lie ideal of all nilpotent finite rank operators in $\LL$ is a nontrivial Lie ideal of $\LL$. 
\end{example}

These examples motivate the following question.

\begin{question}
Which infinite-dimensional nonabelian Lie algebras of trace-zero finite rank operators are simple?
\end{question}

We remark that if a Lie algebra of trace-zero finite rank operators $\LL$ is strictly triangularizable, then by \cite[Theorem 5.8]{BST10}, $\LL$ is not simple. So, a necessary condition for the simplicity of Lie algebras of finite rank operators is the non-triangularizability of these Lie algebras.

\section*{Acknowledgement}
The author is grateful to Victor Shulman for his invaluable input on the subject. 

\section*{References}

\begin{biblist}
\bib{BST10}{article}{
   author={Bre\v{s}ar, Matej},
   author={Shulman, Victor S.},
   author={Turovskii, Yuri V.},
   title={On tractability and ideal problem in non-associative operator
   algebras},
   journal={Integral Equations Operator Theory},
   volume={67},
   date={2010},
   number={2},
   pages={279--300},
   issn={0378-620X},
   review={\MR{2650775}},
   doi={10.1007/s00020-010-1781-z},
}
\bib{C41}{article}{
   author={Calkin, J. W.},
   title={Two-sided ideals and congruences in the ring of bounded operators
   in Hilbert space},
   journal={Ann. of Math. (2)},
   volume={42},
   date={1941},
   pages={839--873},
   issn={0003-486X},
   review={\MR{5790}},
   doi={10.2307/1968771},
}

\bib{D66}{article}{
   author={Dixmier, Jacques},
   title={Existence de traces non normales},
   language={French},
   journal={C. R. Acad. Sci. Paris S\'{e}r. A-B},
   volume={262},
   date={1966},
   pages={A1107--A1108},
   issn={0151-0509},
   review={\MR{196508}},
}

\bib{FR83}{article}{
   author={Fong, C. K.},
   author={Radjavi, H.},
   title={On ideals and Lie ideals of compact operators},
   journal={Math. Ann.},
   volume={262},
   date={1983},
   number={1},
   pages={23--28},
   issn={0025-5831},
   review={\MR{690004}},
   doi={10.1007/BF01474167},
}

\bib{KW02}{article}{
    AUTHOR = {Kaftal, Victor},
    author = {Weiss, Gary},
     TITLE = {Traces, ideals, and arithmetic means},
   JOURNAL = {Proc. Natl. Acad. Sci. USA},
  FJOURNAL = {Proceedings of the National Academy of Sciences of the United
              States of America},
    VOLUME = {99},
      YEAR = {2002},
    NUMBER = {11},
     PAGES = {7356--7360},
      ISSN = {0027-8424},
   MRCLASS = {47L20 (47B47)},
  MRNUMBER = {1907839},
       DOI = {10.1073/pnas.112074699},
       URL = {https://doi.org/10.1073/pnas.112074699},
}
\bib{KW07}{article}{
   author={Kaftal, Victor},
   author={Weiss, Gary},
   title={Soft ideals and arithmetic mean ideals},
   journal={Integral Equations Operator Theory},
   volume={58},
   date={2007},
   number={3},
   pages={363--405},
   issn={0378-620X},
   review={\MR{2320854}},
   doi={10.1007/s00020-007-1498-9},
}

\bib{GK1969}{book}{
    author       = {Israel C. Gohberg}
    author = {Mark Grigor'evich Kre$\breve{\text{i}}$n},
    date         = {1969},
    title        = {Introduction to the theory of linear nonselfadjoint operators},
    doi          = {10.1090/mmono/018},
    isbn         = {978-0-8218-1568-7 (print); 978-1-4704-4436-5 (online)},
    language     = {English},
    origlanguage = {Russian},
    pages        = {xv+378},
    publisher    = {American Mathematical Society, Providence, R.I.},
    series       = {Translations of Mathematical Monographs},
    translator   = {A. Feinstein},
    volume       = {18},
    mrclass      = {47.10},
    mrnumber     = {0246142 (39 \#7447)},
    year         = {1969},
  }
  
 \bib{H03}{book}{
   author={Hall, Brian C.},
   title={Lie groups, Lie algebras, and representations},
   series={Graduate Texts in Mathematics},
   volume={222},
   note={An elementary introduction},
   publisher={Springer-Verlag, New York},
   date={2003},
   pages={xiv+351},
   isbn={0-387-40122-9},
   review={\MR{1997306}},
   doi={10.1007/978-0-387-21554-9},
}

\bib{M64}{article}{
   author={Mitjagin, B. S.},
   title={Normed ideals of intermediate type},
   language={Russian},
   journal={Izv. Akad. Nauk SSSR Ser. Mat.},
   volume={28},
   date={1964},
   pages={819--832},
   issn={0373-2436},
   review={\MR{0173935}},
}

 \bib{P80}{book}{
   author={Pietsch, Albrecht},
   title={Operator ideals},
   series={North-Holland Mathematical Library},
   volume={20},
   note={Translated from German by the author},
   publisher={North-Holland Publishing Co., Amsterdam-New York},
   date={1980},
   pages={451},
   isbn={0-444-85293-X},
   review={\MR{582655}},
} 

\bib{PW12}{article}{
   author={Patnaik, Sasmita},
   author={Weiss, Gary},
   title={Subideals of operators II},
   journal={Integral Equations Operator Theory},
   volume={74},
   date={2012},
   number={4},
   pages={587--600},
   issn={0378-620X},
   review={\MR{3000435}},
   doi={10.1007/s00020-012-2007-3},
}

  \bib{PW13}{article}{
   author={Patnaik, Sasmita},
   author={Weiss, Gary},
   title={Subideals of operators},
   journal={J. Operator Theory},
   volume={70},
   date={2013},
   number={2},
   pages={355--373},
   issn={0379-4024},
   review={\MR{3138361}},
   doi={10.7900/jot.2011sep02.1926},
}

\bib{W76}{article}{
   author={Wojty\'{n}ski, W.},
   title={Banach-Lie algebras of compact operators},
   journal={Studia Math.},
   volume={59},
   date={1976/77},
   number={3},
   pages={263--273},
   issn={0039-3223},
   review={\MR{430806}},
   doi={10.4064/sm-59-3-263-273},
}

\end{biblist}

\end{document}